\numberwithin{equation}{section}
\newtheorem{theo}{Theorem}[section]
\newtheorem{coro}[theo]{Corollary}
\newtheorem{lemm}[theo]{Lemma}
\newtheorem{prop}[theo]{Proposition}
\newtheorem{defi}[theo]{Definition}
\newtheorem{rema}[theo]{Remark}
\newtheorem{exam}[theo]{Example}
\newenvironment{proof}{\noindent \textbf{{Proof.}} \sf}
\def\qed{\hfill $\diamond$ \bigskip}
\def\lim{\mathop{\rm lim}\nolimits}
\def\HH{\mathsf H}
\def\HHH{\mathsf{HH}}
\def\Ext{\mathsf{Ext}}
\def\Hom{\mathsf{Hom}}
\def\Tor{\mathsf{Tor}}
\def\dim{\mathsf{dim}}
\begin{document}
\sf

\title{Deleting or adding arrows of a bound quiver algebra and Hochschild (co)homology}
\author{Claude Cibils,  Marcelo Lanzilotta, Eduardo N. Marcos,\\and Andrea Solotar
\thanks{\footnotesize This work has been supported by the projects  UBACYT 20020130100533BA, PIP-CONICET 11220150100483CO, USP-COFECUB.
The third mentioned author was supported by the thematic project of FAPESP 2014/09310-5 and acknowledges support from the "Brazilian-French Network in Mathematics". The fourth mentioned author is a research member of CONICET (Argentina) and a Senior Associate at ICTP.}}

\date{}
\maketitle
\begin{abstract}
We describe how the Hochschild (co)homology of a bound quiver algebra changes when deleting or adding arrows to the quiver. The main tools are relative Hochschild (co)homology, the Jacobi-Zariski long exact sequence obtained by A. Kaygun and a length one relative projective resolution of tensor algebras.

\end{abstract}

\noindent 2010 MSC: 18G25, 16E40, 16E30, 18G15

\noindent \textbf{Keywords:} Hochschild, cohomology, homology, relative, quiver, arrow

\section{\sf Introduction}

Hochschild (co)homology has been widely studied for algebras presented by quivers and relations, that is for bound quiver algebras. The main purpose of this work is to make precise how the Hochschild (co)homology changes when deleting or adding arrows.

In 1956 G. Hochschild  \cite{HOCHSCHILD1956} introduced a relative (co)homological theory, which amounts to consider an exact category, see \cite{QUILLEN,BUHLER}. This theory has been used rarely, in part because of a lack of relation with the usual (co)homological theory given in \cite{HOCHSCHILD1945}.  Nevertheless, for an extension of algebras $B\subset A$ such that $A/B$ is a projective  $B$-bimodule,  A. Kaygun has established in 2012 \cite{KAYGUN,KAYGUNe}  a Jacobi-Zariski long exact sequence connecting both theories.  Note that M. Auslander and \O. Solberg have considered relative homological algebra, see \cite{AUSLANDERSOLBERG1} and  \cite{SOLBERG}.

In Section \ref{account} we provide an account of the relative (co)homological theory and we show that for a tensor  algebra there is a relative projective resolution of length one.

An inert arrow of a bound quiver algebra $A$ is an arrow of the quiver which is not involved in a minimal set of relations of $A$. Our main result in Section \ref{homology} is that deleting a set of inert arrows does not change the Hochschild homology in degrees greater or equal than $2$.

On the other hand in Section \ref{cohomology} we provide formulas for Hochschild cohomology, also when deleting inert arrows of a bound quiver algebra. For degrees greater or equal than $2$,  the change of dimension is expressed in terms of the dimensions of $\Ext$ vector spaces between indecomposable injective and projective modules, with multiplicities given by the dimensions of some relative paths that we introduce in Section \ref{cohomology}.

The proofs rely on the reverse procedure, that is on adding arrows. This way we obtain a tensor algebra over the original algebra for a suitable projective bimodule. This algebra is finite dimensional if and only if adding arrows does not create relative cycles. In case of adding just one arrow, which does not leads to a relative cycle, the tensor algebra is a trivial extension which has been already considered in  \cite{GREENPSAROUDAKISSOLBERG}. In this paper E.L. Green, C. Psaroudakis and {\O}. Solberg study this operation in relation to the finitistic dimension conjecture.

In Section \ref{examples} we apply our results to the amalgamation of a bound quiver algebra with a quiver without oriented cycles. In this section we also consider Hochschild cohomology of Gorenstein algebras, by adding arrows to its  quiver.

The results that we obtain in this paper are in the framework of replacing an algebra by a closely related  algebra, towards producing algorithmic ways for computing Hochschild (co)homology. Other work in this direction can be found for instance in \cite{BELIGIANNISJPAA, BUCHW, CIBILS2000, CMRS2002, GREENMARCOSSNASHALL, GREENSOLBERG, HAPPEL, HERMANN, KOENIGNAGASE, MICHELENAPLATZEC}. Hochschild (co)homology has been also considered for bound quiver algebras in relation to their representations and their deformations,  see for instance  \cite{HAPPEL1990,HAPPEL1998,HERMANN,SNASHALLSOLBERG}.

Hochschild (co)homology is a derived invariant, see for instance \cite{RICKARD}. M. Gerstenhaber has shown  in \cite{GERSTENHABER} that Hochschild (co)homology affords additional structure: cup and cap products, and the Gerstenhaber bracket. Together with Connes' differential this constitutes the differential calculus, or Tamarkin-Tsygan calculus of an algebra, see for instance \cite{TAMARKIN TSYGAN,CUNTZSKANDALISTSYGAN}. This theory is also a derived invariant, see \cite{ARMENTAKELLER1,ARMENTAKELLER2}. It should be interesting to describe the effect of deleting  an inert arrow of a bound quiver algebra on its differential calculus.

\normalsize

\section{\sf Relative Hochschild (co)homology and tensor algebras}\label{account}\label{tensor}
Let $k$ be a field and let $B\subset A$ be an extension of $k$-algebras. Modules are left modules.

An $A$-module $P$ is \emph{relative projective} if for any $A$-morphism $f:X\to Y$ with a $B$-section, and any $A$-morphism $g:P\to Y$, there exists an $A$-morphism $g': P\to X$ such that $fg'=g$. The category of $A$-modules is an exact category with respect to the short exact sequences which are $B$-split, see \cite{BUHLER}. As mentioned in \cite{BUHLER}, these notions are commonly attributed to D. Quillen \cite{QUILLEN}. The projective objects of this exact category (see \cite[Definition 11.1]{BUHLER}) are the relative projective modules.

An $A$-module is \emph{induced} if it is isomorphic to $A\otimes_B U$, where $U$ is a $B$-module. As proved by Hochschild in \cite{HOCHSCHILD1956}, a module is relative projective if and only if it is isomorphic to a direct summand of an induced module.

 \begin{defi}\cite{HOCHSCHILD1956}\label{relprojres}
A \emph{relative projective resolution} $P_\bullet \to M$ of an $A$-module $M$ is a complex
  $$\dots\stackrel{d}{\to}P_2\stackrel{d}{\to}P_1\stackrel{d}{\to}P_0\stackrel{d}{\to}M\to 0$$
where $P_i$ is relative projective for all $i$ and there exists a $B$-contracting homotopy, that is there exist $B$-maps $t$ of degree $-1$ such that $dt+td=1$.
  \end{defi}

It can be easily shown that relative projective resolutions are the projective resolutions in the exact category with respect to $B$-split short exact sequences, see \cite[Definition 12.1]{BUHLER}. The usual comparison theorem holds for relative projective resolutions, see \cite[p. 250]{HOCHSCHILD1956} and \cite[Theorem 12.4]{BUHLER}. Hence the following definitions do not depend, up to isomorphism, on the choice of a relative projective resolution.

 \begin{defi}
 Let $M$ and $N$ be $A$-modules and let $P_\bullet \to M$ be a relative projective resolution. The cohomology of the cochain complex
 $$0\to\Hom_A(P_0,N)\to \Hom_A(P_1,N)\to\cdots$$
 is  $\Ext^*_{A|B}(M,N)$.

  Let $M$ be a left $A$-module and let $N$ be a right $A$-module. Let $P_\bullet \to M$ be a relative projective resolution. The homology of the chain complex
  $$\cdots\to N\otimes_A P_1 \to N\otimes_A P_0 \to 0$$
  is $\Tor_*^{A|B}(N,M)$.
  \end{defi}

 Next we recall the relative theory provided by Hochschild in \cite{HOCHSCHILD1956} for bimodules.

  Let $C$ and $D$ be $k$-algebras. The category of $C\!-\!D$-bimodules is identified with the category of  $C\otimes D^{{\mathsf op}}$-modules. A projective $C-D$-bimodule is a projective $C\otimes D^{\mathsf op}$-module.

  \begin{defi}
Let $M$ be an $A$-bimodule. The relative Hochschild cohomology and homology of $A$ respect to $B$ with coefficients in $M$ are respectively the graded vector spaces
  $$\HH^*(A|B, M)=\Ext^*_{(A\!-\!A)|(B\!-\!B)}(A,M)$$
  $$\HH_*(A|B, M)=\Tor_*^{(A\!-\!A)| (B\!-\!B)}(M,A).$$
  \end{defi}

 There is a relative bar resolution of $A$ which provides the subsequent way of computing relative Hochschild (co)homology, see  \cite{HOCHSCHILD1956}. As mentioned in the Introduction, Kaygun obtained long exact sequences relating the usual Hochschild (co)homology with the relative one, see \cite{KAYGUN,KAYGUNe}.

Let $B$ be a $k$-algebra and let $N$ be a $B$-bimodule. The \emph{tensor algebra }
$$T=T_B(N)= B\ \oplus\ N\ \oplus\  N\otimes_B N\ \oplus \cdots \oplus N^{\otimes_B n} \oplus \cdots$$
 is a non negatively graded algebra with $T_0=B$ and $T_n=N^{\otimes_B n}$ for $n>0$. Its \emph{positive part} is $T^{>0}=\oplus_{n>0}N^{\otimes_B n}$.
\begin{exam}\label{kQ}
Let $Q$ be a finite quiver, with set of vertices $Q_0$, set of arrows $Q_1$, and $s,t:Q_1\to Q_0$ the maps which associate to each arrow respectively its source and target vertices. The path algebra $kQ$ is the tensor algebra over the semisimple commutative algebra $kQ_0$ of the $kQ_0$-bimodule $kQ_1$. We denote by $Q_n$ the set of paths of length $n$, note that $(kQ)_n =kQ_n$.
\end{exam}

A \emph{bound quiver algebra} is an algebra $kQ/ I$, where $I$ is an \emph{admissible} ideal, that is  $\langle Q_n\rangle \subset I \subset \langle Q_2\rangle$ for some $n$.

Let $k$ be an algebraically closed field.  By results of P. Gabriel in \cite{GABRIEL1973,GABRIEL1980}, see also for instance  \cite[Theorem 3.7]{ASSEMSIMSONSKOWRONSKY} or \cite{SCHIFFLER}, a finite dimensional $k$-algebra $A$ is Morita equivalent to a bound quiver algebra.

We recall the universal property of a tensor algebra $T=T_B(N)$. Let $\Lambda$ be a $k$-algebra. An algebra morphism $\varphi :T\to \Lambda$ is uniquely determined by an algebra morphism $\varphi_0: B\to \Lambda$ - which turns $\Lambda$ into a $B$-bimodule - and a $B$-bimodule morphism $\varphi_1: N\to \Lambda$.

\begin{theo}\label{cero}
Let $T=T_B(N)$ be the tensor algebra over a $k$-algebra $B$ of a $B$-bimodule $N$.  Let $X$ be a $T$-bimodule. For $*\geq 2$,
$$\HH^*(T|B,X)= 0,$$$$\HH_*(T|B,X)=0.$$
\end{theo}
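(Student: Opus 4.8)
The plan is to construct an explicit relative projective resolution of $T$ of length one in the category of $T$-bimodules relative to $B$-bimodules, i.e. an exact sequence
$$0\to P_1\to P_0\to T\to 0$$
in which $P_0,P_1$ are $(T\!-\!T)|(B\!-\!T)$-projective and which admits a $B\!-\!T$-bimodule contracting homotopy. Once such a resolution is in hand, the comparison theorem for relative projective resolutions (cited in the excerpt) gives that $\HH^*(T|B,M)$ and $\HH_*(T|B,M)$ are computed from a complex concentrated in degrees $0$ and $1$, so both vanish for $*\geq 2$. The candidate resolution is the relative analogue of the standard length-one resolution of a tensor algebra from \cite[Theorem 2.3]{CIBILS1991}: take $P_0=T\otimes_B T$ with the multiplication map $\mu:T\otimes_B T\to T$, and $P_1=T\otimes_B N\otimes_B T$, with the map $\iota:T\otimes_B N\otimes_B T\to T\otimes_B T$ given on generators by $x\otimes n\otimes y\mapsto xn\otimes y - x\otimes ny$. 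Both are extended bimodules ($T\otimes_B U\otimes_T T = T\otimes_B U$ for $U$ a $B\!-\!T$-bimodule, taking $U=T$ resp. $U=N\otimes_B T$), hence relative projective by Lemma \ref{fabric}.

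First I would check that $\mu\iota=0$, which is immediate, and that $\mu$ is surjective with the obvious $B$-linear (indeed $B\!-\!T$-linear) section $t_{-1}:T\to T\otimes_B T$, $z\mapsto 1\otimes z$. The substantive point is to produce a $B\!-\!T$-bimodule map $t_0:T\otimes_B T\to T\otimes_B N\otimes_B T$ such that $\iota t_0 + t_{-1}\mu = \mathrm{id}_{T\otimes_B T}$ and $t_0\iota = \mathrm{id}_{T\otimes_B N\otimes_B T}$, so that $(t_{-1},t_0)$ is a contracting homotopy by $B\!-\!T$-maps. Using the grading $T=\bigoplus_n N^{\otimes_B n}$, an element of $T\otimes_B T$ spanned by $(n_1\otimes\cdots\otimes n_p)\otimes y$ should be sent, telescopically, to
$$\sum_{j=1}^{p}\, n_1\otimes\cdots\otimes n_{j-1}\,\otimes\, n_j\,\otimes\, (n_{j+1}\otimes\cdots\otimes n_p)\,y,$$
the familiar "splitting off one tensor factor at a time" homotopy; I would verify on homogeneous elements that $\iota$ applied to this sum telescopes to $(n_1\otimes\cdots\otimes n_p)\otimes y - 1\otimes (n_1\otimes\cdots\otimes n_p)y$, which is exactly $\mathrm{id}-t_{-1}\mu$, and separately that $t_0\iota=\mathrm{id}$ on $T\otimes_B N\otimes_B T$. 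One must be careful that $t_0$ is well defined over the relative tensor products, i.e. compatible with moving elements of $B$ across the $\otimes_B$ symbols; this is where the homogeneous bookkeeping has to be done honestly, but it is routine.

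The main obstacle is precisely the well-definedness and $B\!-\!T$-linearity of $t_0$ together with checking the two homotopy identities hold exactly (not just up to a sign or a boundary term); the degree-$0$ piece $N^{\otimes_B 0}=B$ must be handled as a base case, where the sum above is empty and $t_0$ sends $b\otimes y\mapsto 0$, consistently with $t_{-1}\mu(b\otimes y)=1\otimes by = b\otimes y$ when $b\in B$ is absorbed. After establishing the resolution, the conclusion is formal: apply $\Hom_{B\!-\!B}(-,M)$ and $M\otimes_{B\!-\!B}(-)$ (equivalently use Corollaries \ref{coho} and its homological analogue with the relative bar resolution replaced by this one), observe the resulting (co)chain complexes live in degrees $0,1$ only, and deduce $\HH^*(T|B,M)=\HH_*(T|B,M)=0$ for $*\geq 2$.
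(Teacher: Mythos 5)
Your proposal is correct and follows essentially the same route as the paper: the same length-one relative resolution $0\to T\otimes_B N\otimes_B T\to T\otimes_B T\to T\to 0$ with the bimodules recognized as extended (hence relative projective), the same section $z\mapsto 1\otimes z$, and the same telescoping homotopy $\sum_j n_1\otimes\cdots\otimes n_j\otimes(n_{j+1}\cdots n_p)y$ verified via the identities $rg=1$ and $gr+sf=1$. The well-definedness and base-case issues you flag are exactly the points the paper handles with its graded bookkeeping, so no gap remains.
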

\begin{proof}
We will show that the following is a $(T\!-\!T|B\!-B)$ relative projective resolution of $T$:
\begin{equation}\label{Therelprojres}
0\to T\otimes_B N\otimes_BT\stackrel{g}{\to} T\otimes_B T \stackrel{f}{\to} T\to 0
\end{equation}
where $f$ is the product of $T$ and $g(x\otimes n\otimes y)=xn\otimes y - x\otimes ny.$ We obtain the results of the statement by using this relative resolution to compute Hochschild (co)homology.

It is straightforward to verify that $fg=0$. Clearly the involved $T$-bimodules are induced, hence they are relative projective.

In order to define a $(B\!-\!B)$-contracting  homotopy, we introduce notation. Firstly, instead of $x_1\otimes\dots \otimes x_n$ we will write $x_1,\dots, x_n$. Secondly observe that the tensor element $x_1,\dots, x_n$ can be considered in different ways: for any decomposition $n=i+j$ with $i\geq 0$ and $j\geq 0$, it can be viewed as an element of
$$N^{\otimes_B i}\otimes_B N^{\otimes_B j} \subset \left(T\otimes_B T\right)_n = \bigoplus_{k+l=n}\left(N^{\otimes_B k}\otimes_B N^{\otimes_B l}\right) \subset T\otimes_B T.$$
In this case we write this element as
$(x_1,{\dots},x_i\stackrel{\triangledown}{,}x_{i+1},\dots,x_n).$
For $n=0+n$ and $n=n+0$ we remark that the notation is:
$$(\stackrel{\triangledown}{,}\!x_1,\dots,x_n)\in B\otimes_B N^{\otimes_Bn}\subset (T\otimes_B T)_n,$$
$$(x_1,\dots,x_n\!\!\stackrel{\triangledown}{,})\in  N^{\otimes_Bn}\otimes_BB\subset (T\otimes_B T)_n.$$
Finally for $n=i+1+j$ with $i\geq 0$ and $j\geq 0$, the tensor $x_1,\dots,x_n$ can also be considered as an element of
$$
\arraycolsep=1mm\def\arraystretch{2}
\begin{array}{lll}
N^{\otimes_Bi}\otimes_BN\otimes_BN^{\otimes_Bj}&\subset (T\otimes_BN\otimes_BT)_n\\&=\bigoplus_{k+1+l=n}\left(N^{\otimes_B k}\otimes_BN\otimes_B N^{\otimes_Bl}\right)\\
&\subset T\otimes_BN\otimes_BT.
\end{array}
$$
In this case we denote it by $x_1,\dots,x_i,\stackrel{\triangledown}{x_{i+1}},x_{i+2},\dots, x_n$. For $n=0+1+(n-1)$ and $n=(n-1)+1+0$ we write:
$$\stackrel{\triangledown}{x_1},\dots,x_n \in B\otimes_B N\otimes_B N^{\otimes_B (n-1)}\  \subset \ (T\otimes_B N\otimes_B T)_n,$$
$$x_1,\dots,\stackrel{\triangledown}{x_n} \in N^{\otimes_B (n-1)}\otimes_B N\otimes_B B\  \subset \ (T\otimes_B N\otimes_B T)_n.$$
With these notations the morphisms $f$ and $g$ are as follows:
$$f(x_1,\dots,x_i,\stackrel{\triangledown}{,}x_{i+1},\dots,x_n)=x_1,\dots,x_n$$
$$
\begin{array}{lll}
g(x_1,\dots,x_i,\stackrel{\triangledown}{x_{i+1}},x_{i+2},\dots, x_n)= (x_1,\dots,x_{i+1}\stackrel{\triangledown}{,}\dots,x_n)-(x_1,{\dots}\stackrel{\triangledown}{,}x_{i+1},\dots,x_n).
\end{array}$$
The sequence (\ref{Therelprojres}) is graded, it is the direct sum of the following sequences of $B$-bimodules:
\begin{equation*}
0\longrightarrow (T\otimes_B N\otimes_BT)_n\stackrel{g}{\longrightarrow} (T\otimes_B T)_n \stackrel{f}{\longrightarrow} T_n\longrightarrow 0
\end{equation*}
Next we define $s:T_n\to (T\otimes_B T)_n$ and $r:(T\otimes_B T)_n\to (T\otimes_B N\otimes_BT)_n$ and we  prove that these maps verify the following:
\begin{itemize}
\item $r$ and $s$ are  $B\!-\!B$-bimodule maps,
\item $fs=1$ and $rg=1$,
\item $gr+sf = 1$
\end{itemize}
providing this way the required contracting homotopy, see Definition \ref{relprojres}.

The morphism $s$ is defined by  $s(x_1,\dots, x_n)= (\stackrel{\triangledown}{,}\!x_1,\dots,x_n)$, note that it is well defined. Moreover $s$ is easily seen to be a $B\!-\!B$-section of $f$.

In order to define $r$, let $i\geq 0$, $j\geq 0$ with $i+j=n$. Let $$r_i:  N^{\otimes_B i}\otimes_B N^{\otimes_B j} \to (T\otimes_B N\otimes_B T)_n$$ be given by
$$r_i(x_1,\dots,x_i,\stackrel{\triangledown}{,}x_{i+1},\dots,x_n)=\sum_{k=1}^i (x_1,{\dots},\stackrel{\triangledown}{x_{k}},\dots, x_n).$$
Observe that $r_0=0$. Each $r_i$ is a well defined $B\!-\!B$-morphism. We put $r=\oplus_{i=0}^nr_i$. Next we verify that $rg=1$.
$$
\arraycolsep=1mm\def\arraystretch{2}
\begin{array}{llll}
rg(x_1,\dots,\stackrel{\triangledown}{x_{i+1}},\dots, x_n)=\\
r_{i+1}(x_1,\dots, x_{i+1}\stackrel{\triangledown}{,}\dots,x_n)-r_i(x_1,{\dots} \stackrel{\triangledown}{,}x_{i+1},\dots,x_n)=\\
\sum_{k=1}^{i+1} (x_1,{\dots}\stackrel{\triangledown}{x_{k}},\dots, x_n)- \sum_{k=1}^i (x_1,{\dots}\stackrel{\triangledown}{x_{k}},\dots, x_n)=\\
(x_1,\dots,\stackrel{\triangledown}{x_{i+1}},\dots, x_n).
\end{array}
$$

Finally we verify that $gr+sf=1$.
$$sf(x_1,\dots,x_i\stackrel{\triangledown}{,}x_{i+1},\dots,x_n)=s(x_1,\dots, x_n)= (\stackrel{\triangledown}{,}\!x_1,\dots,x_n).$$
$$
\arraycolsep=1mm\def\arraystretch{2}
\begin{array}{llll}
gr(x_1,\dots,x_i\stackrel{\triangledown}{,}x_{i+1},\dots,x_n)=g\left(\sum_{k=1}^i (x_1,\dots,\stackrel{\triangledown}{x_{k}},\dots, x_n)\right)=\\
\sum_{k=1}^i\left( (x_1,\dots,x_{k}\stackrel{\triangledown}{,}\dots,x_n)-(x_1,{\dots}\stackrel{\triangledown}{,}x_{k},\dots,x_n)\right)=\\
-(\stackrel{\triangledown}{,}\!x_1,\dots,x_n)+(x_1,\dots,x_i\stackrel{\triangledown}{,}x_{i+1},\dots,x_n).
\end{array}
$$
In particular $(gr+sf)(\stackrel{\triangledown}{,}\!x_1,\dots,x_n)=0+(\stackrel{\triangledown}{,}\!x_1,\dots,x_n).$
\qed
\end{proof}

\section{\sf Homology}\label{homology}

Let $A=kQ/I$ be a bound quiver algebra, and let $R$ be a minimal finite subset of the path algebra $kQ$ such that $I=\langle R\rangle$. Note that elements $r\in I$ are possibly linear combination of parallel paths, that is of paths with the same source and the same target vertices, see for instance \cite{ASSEMSIMSONSKOWRONSKY}.
 \begin{defi}
 Let $A=kQ/\langle R \rangle$ be a bound quiver algebra. An arrow $a$ of $Q$ is \emph{inert} if $a$ does not appear in any of the paths which provide the elements of $R$.
\end{defi}

Let $D$ be a set of inert arrows, let $Q_{{\setminus}D}$ be the quiver $Q$ where the arrows of $D$ are deleted. We set
\begin{equation}\label{deleted}
A_{\setminus D}=k(Q_{{\setminus}D})/\langle R\rangle_{k(Q_{{\setminus}D})}
\end{equation}
where the denominator is the two sided ideal generated by $R$ in $k(Q_{{\setminus}D})$. Note that $A_{{\setminus}D}$ is a bound quiver algebra. Moreover $A_{{\setminus}D}$ is a subalgebra of $A$.

 \begin{theo}
Let  $A=kQ/\langle R \rangle$ be a bound quiver algebra and let $D$ be a set of inert arrows of $Q$. For $*\geq 2$

$$\HHH_*(A_{\setminus D})\simeq \HHH_*(A).$$

 \end{theo}

The proof will rely on the reverse procedure, namely adding new arrows.

\begin{defi}
Let $Q$ be a quiver and let $F$ be a finite set of \emph{new arrows}, that is $F$ is a finite set with maps $s,t: F\to Q_0$. The \emph{new quiver} $Q_F$ is given by $(Q_F)_0=Q_0$ and $(Q_F)_1 =Q_1 \cup F$, where the source and target maps are provided by the corresponding maps of $Q_1$ and $F$.

Let $B=kQ/I$ be a bound quiver algebra, and let $F$ be a set of new arrows. We set
$$B_F= kQ_F/\langle I\rangle_{kQ_F}.$$
\end{defi}

\begin{rema}
Suppose that $B_F$ is finite dimensional. Obviously the set of arrows $F$ is inert and $(B_F)_{\setminus F} =B$.

If $A=kQ/\langle R\rangle$ is a bound quiver algebra and $D$ is a set of inert arrows, then $\left(A_{\setminus D}\right)_D=A$.
\end{rema}

Given a bound quiver algebra $B$ and a set of new arrows $F$, consider the projective $B$-bimodule
\begin{equation}\label{N}
N= \bigoplus_{a\in F} Bt(a)\otimes s(a)B.
\end{equation}

The following result is clear by using the universal property of tensor algebras.
\begin{theo}
Let $B=kQ/\langle R \rangle$ be a bound quiver algebra, and let $F$ be a set of new arrows. The algebra $B_F$ is isomorphic to the tensor algebra $T=T_B(N)$.
\end{theo}

\begin{theo}Let $B$ be a bound quiver algebra, and let $F$ be a set of new arrows. For $*\geq 2$ $$\HHH_*(B_F)\simeq \HHH_*(B).$$\end{theo}

\begin{proof}
By the previous result,  $B_F\simeq T=T_B(N)$. The $B$-bimodule $T/B = T^{>0}$
is projective, hence the Jacobi-Zariski long exact sequence for Hochschild homology obtained by Kaygun holds, see \cite{KAYGUN,KAYGUNe}.
On the other hand, by Theorem \ref{cero} we have that $\HH_*(T|B, T)=0$ for $*\geq 2$. Then
$\HHH_*(T)\simeq \HH_*(B,T)$ for $*\geq 2$. Moreover $\HH_*(B,T)=\HHH_*(B)\oplus \HH_*(B, T^{>0})$. The second summand is zero since the $B$-bimodule of coefficients is projective.\qed
\end{proof}

\section{\bf \sf Cohomology}\label{cohomology}

 In case of deleting one inert arrow we will obtain the following result. Note that for $X$ a right module, we denote by $X'$ the left module $\Hom_k(X,k)$.

\begin{theo}\label{deletone}
Let $A$ be a bound quiver algebra, let $a$ be an inert arrow from $e$ to $f$ and let $B=A_{\setminus\{a\}}$. For $*\geq 2$ we have
$$\dim_k \HHH^*(B)= \dim_k \HHH^*(A) -  \dim_k \Ext_A^*((fA)', Ae).$$
\end{theo}
See Remark \ref{proofonearrow} for a proof of the previous result.

Next we state a result for a set of deleted arrows. As in the previous section, the proof will be by the reverse procedure of adding arrows.  However the Jacobi-Zariski long exact sequence for cohomology requires that the bimodule of coefficients is finite dimensional. To ensure this, and to state the result, we introduce the following.

Let $A=kQ/\langle R\rangle$ be a bound quiver algebra, let $D$ be a set of inert arrows and let $B=A_{{\setminus}D}$, see (\ref{deleted}).
\begin{itemize}
\item[-] A pair of arrows $(a_2, a_1)$ of $D$ is \emph{linked}  if $s(a_2)Bt(a_1) \neq 0$.

\item[-] A \emph{relative $n$-path} is a sequence  $\gamma=a_na_{n-1}\dots a_2a_1$ of arrows of $D$
such that $(a_{i+1},a_i)$ is linked for $i=1,\dots, n-1$. The \emph{source} $s(\gamma)$ and the \emph{target} $t(\gamma)$ of $\gamma$ are respectively $s(a_1)$ and $t(a_n)$.

\item[-] The set of relative $n$-paths is denoted $\Gamma_n$, while the set of all relative paths is denoted $\Gamma$.

\item[-] The relative path $\gamma$ is a \emph{relative cycle} if $(a_1,a_n)$ is linked. This way an arrow $a$ is a relative cycle if $(a,a)$ is linked, that is if $s(a)Bt(a)\neq 0$.

\item[-] We set
$$\dim_k\gamma = \prod_{i=1}^{n-1}\dim_k s(a_{i+1})Bt(a_{i}).$$
If $\gamma = a\in F$, then $\dim_k\gamma =1$.
\end{itemize}

We will prove the following as a consequence of Theorem \ref{cohonew}.

\begin{theo}\label{deletefamily}
Let $A$ be a bound quiver algebra, let $D$ be a set of inert arrows and let $B=A_{\setminus D}$. For $*\geq 2$ we have
$$\dim_k \HHH^*(B)= \dim_k \HHH^*(A) - \sum_{\gamma\in\Gamma}\dim_k\gamma \ \dim_k \Ext_B^*((t(\gamma)B)', Bs(\gamma)).$$
\end{theo}

\begin{rema}\label{proofonearrow}
The proof of Theorem \ref{deletone} follows: if $D=\{a\}$, then there is only one relative path $\gamma = a$. The formula above gives
$$\dim_k \HHH^*(B)= \dim_k \HHH^*(A) - \dim_k\Ext_B^*((t(a)B)', Bs(a)).$$
Moreover, at the end of this section we will prove that $$\Ext_A^*((t(a)A)', As(a))\simeq \Ext_B^*((t(a)B)', Bs(a)).$$
\end{rema}

We first prove the following.
\begin{prop}\label{no relative cycles}
Let $B$ be a bound quiver algebra, let $F$ be a set of new arrows and let $N$ be the associated projective $B$-bimodule, see (\ref{N}). We have
$$N^{\otimes_Bn} \simeq\bigoplus_{\gamma\in\Gamma_n}\dim_k\gamma \ Bt(\gamma)\otimes s(\gamma)B.$$
The algebra $T_B(N)=B_F$ is finite dimensional if and only if the length of the relative paths is bounded, that is there are no relative cycles.
\end{prop}
\begin{proof}
We sketch the proof in case $F=\{a_1,a_2\}$, where $a_1$ is from $e$ to $f$ and $a_2$ from $g$ to $h$, hence $N=(Bf\otimes eB) \oplus (Bh\otimes gB)$.
One of the four direct summands of $N\otimes_B N$ is
$$(Bh\otimes gB)\otimes_B (Bf\otimes eB)= Bh\otimes gBf\otimes eB$$
which is isomorphic as a $B$-bimodule to $\dim_k(gBf) Bh\otimes eB.$ If $(a_2,a_1)$ is linked, then this direct summand is non zero and it corresponds  to the  relative path $a_2a_1$ in the formula. The rest of the proof is along the same lines.

\end{proof}

\begin{lemm}\label{ce}
Let $B$ be a bound quiver algebra, and let $e,f\in Q_0$. We have
$$\HH^*(B, Bf\otimes eB)=\Ext^*_B((eB)', Bf)$$
\end{lemm}
\begin{proof}
First we recall that if $U$ and $W$ are left $B$-modules, then $$\HH^*(B, \Hom_k(W,U))\simeq\Ext^*_B(W,U),$$
see for instance \cite[p. 170, 4.4]{CARTANEILENBERG}).

Let $U$ and $V$ be respectively left and right finite dimensional $B$-modules, so that the $B$-bimodules
$U\otimes V$ and $\Hom_k(V',U)$ are isomorphic. Then
$$\HH^*(B, U\otimes V)\simeq \Ext^*_B(V',U)$$\qed
\end{proof}
\begin{theo}\label{cohonew}
Let $B= kQ/\langle R\rangle$ be a bound quiver algebra, let $F$ be a set of new arrows and let $A=B_F=T_B(N)$. Suppose that there are no relative cycles, that is $A$ is finite dimensional.  For $*\geq 2$
$$\HHH^*(A)= \HHH^*(B)\oplus \bigoplus_{\gamma\in\Gamma} \dim_k\gamma\  \Ext^*_B(({s(\gamma)}B)', B{t(\gamma)}).$$
\end{theo}
\begin{proof}
We infer from the Jacobi-Zariski long exact sequence obtained by Kaygun in \cite{KAYGUN,KAYGUNe} and from Theorem \ref{cero} that for $*\geq 2$ we have $\HHH^*(A)\simeq \HH^*(B,A).$ Moreover $ \HH^*(B,A) \simeq \HHH^*(B) \oplus \HH^*(B, T^{>0}).$
By Proposition \ref{no relative cycles}
$$\HH^*(B, N^{\otimes_Bn})\simeq \bigoplus_{\gamma \in \Gamma_n}\dim_k\gamma\ \HH^*(B, Bt(\gamma)\otimes s(\gamma)B).$$
Lemma \ref{ce} provides the result.
\qed
\end{proof}

Theorem \ref{deletefamily} is inferred from the previous result by adding as new arrows the deleted ones.

\begin{lemm}\label{flechaunica}
Let $B$ be a bound quiver algebra. Let $a$ be a new arrow from $e$ to $f$ which does not provides a relative oriented cycle, and let $A=B_F=T_B(N)$. For $*\geq 2$
$$\Ext_A^*((fA)', Ae)\simeq \Ext_B^*((fB)', Be).$$
\end{lemm}
\begin{proof}
The Kaygun's Jacobi-Zariski long exact sequence and Theorem \ref{cero}, show that for $*\geq 2$
$$\HHH^*(A, Af\otimes eA)\simeq \HHH^*(B, Af\otimes eA).$$
We have that $N\otimes_BN=0$ because $a$ is not a relative oriented cycle, that is $eBf=0$. Hence $A= B\oplus N$, where $N=Bf\otimes eB$. A simple computation shows that $Af\otimes eA= Bf\otimes eB$. Hence
$$\HHH^*(B, Af\otimes eA) = \HHH^*(B, Bf\otimes eB).$$
Lemma \ref{ce} provides the result.
\qed
\end{proof}

We record the following result as a consequence of Theorem \ref{deletone}.
\begin{coro}
Let $A$ be a bound quiver algebra, let $a$ be an inert arrow from $e$ to $f$ and let $B=A_{\setminus\{a\}}$. If $Ae$ is an injective module, and/or if $(fA)'$ is a projective module, we have for $*\geq 2$
$$\HHH^*(B)\simeq \HHH^*(A).$$
\end{coro}

In order to use the previous result, we recall the well known criterion for a projective indecomposable module to be injective.

\begin{prop}
Let $A=kQ/I$ be a bound quiver algebra and let $e\in Q_0$.
The indecomposable projective left module $Ae$ is injective  if and only if $\mathsf{soc}Ae$ is a simple module $S_y$ for a vertex $y$, and $\dim_k (Ae) = \dim_k (yA)'$. In that case $Ae\simeq (yA)'$.
\end{prop}

\section{\sf Examples}\label{examples}

\subsection{\sf Amalgamation with a quiver without oriented cycles}

Let $\Lambda=kQ/I$ be a bound quiver algebra, and let $S$ be a finite quiver without oriented cycles. We will  construct a quiver where some vertices of $S$ and $Q$ are identified.

Let $Y\subset S_0$ and  $X\subset Q_0$, with  a bijective map $u:X\to Y$. Let $\sim$ be the equivalence relation on $X\cup Y$ given by $x\sim u(x)$. We extend it to an equivalence relation on $Q_0\cup S_0$ in the obvious manner, each other vertex is just equivalent to itself.

\begin{defi}
The \emph{amalgamated quiver} $Q\cup_u S$ is given by
\begin{itemize}
\item $Q_0\cup_u S_0/\sim$,
\item $(Q\cup_u S)_1= Q_1\cup S_1$.

The \emph{amalgamated algebra} $\Lambda\cup_uS$ is $k(Q\cup_u S)/\langle I\rangle_{k(Q\cup_u S)}$.
\end{itemize}
\end{defi}

\begin{theo}
Let $\Lambda=kQ/I$ be a bound quiver algebra, let $S$ be a finite quiver without oriented cycles and let $u$ be as above. Suppose that for each path of positive length of $S$, one of its extremities is not in $Y$.
For $*\geq 2$
$$\HHH^*(\Lambda\cup_u S)\simeq\HHH^*(\Lambda).$$
\end{theo}

\begin{proof}
Let  $G$ be the quiver $Q$ with the set $S_0\setminus Y$ of new vertices added - each one is a connected component of $G$. Consider the algebra $B=\Lambda\times \mbox{{\Large$\times$}}_{|S_0\setminus Y|}k$. Hochschild cohomology is additive on the product of algebras, and  $\HHH^*(k)= 0$ for $*\geq 1$, hence $\HHH^*(\Lambda) =\HHH^*(B)$ for $*\geq 1$.

Recall that  $G_{S_1}$ is the quiver $G$ with the set $S_1$ of new arrows added, we have $G_{S_1}=Q\cup_u S$. The hypotheses on the paths of $S$ imply that there are no relative oriented cycles, hence $B_{S_1}=\Lambda\cup_uS$ is finite dimensional. Moreover, for each relative path at least one of its extremities is an isolated vertex $x\in S_0\setminus Y$. We have that $(xB)'=Bx$, which is a simple module.  The corresponding $\Ext_B$'s in  Theorem \ref{cohonew} vanish. \qed
\end{proof}

\subsection{\sf Gorenstein algebras}

A finite dimensional $k$-algebra $B$ is \emph{Gorenstein} if $B$  is of finite injective dimension $n_1$ as a left module, and the injective left module $B'$ is of finite projective dimension $n_2$, see for instance \cite{HAPPEL1991}. The algebra is called $n$-Gorenstein for $n$ the maximum of these numbers.

Let $B=kQ/\langle R\rangle$ be a bound quiver algebra. If $B$ is $n$-Gorenstein, then for any $x\in Q_0$ the projective dimension of $(xB)'$ and the injective dimension of $Bx$ is at most $n$.   For instance selfinjective algebras are $0$-Gorenstein algebras. The following result is  a consequence of  Theorem \ref{cohonew}.

\begin{prop}
Let $B=kQ/I$ be a bound quiver algebra which is $n$-Gorenstein. Let $\widehat Q$ be the quiver obtained from $Q$ by adding a finite number of new arrows, and suppose $\widehat B= k\widehat  Q/\langle I\rangle_{\widehat Q}$ is finite dimensional. For $n>0$ and $*\geq n+1$,
$$\HHH^*(\widehat  B)\simeq\HHH^*(B)$$
while for $n=0$ - that is if $B$ is selfinjective- the isomorphism holds for $*\geq 2$.
\end{prop}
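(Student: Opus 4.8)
The plan is to apply Theorem \ref{a} and then observe that the Gorenstein hypothesis forces all the correction terms to vanish in high cohomological degrees. Since $\widehat B$ is assumed finite dimensional, Theorem \ref{a} applies to the map $\alpha:Q_0\times Q_0\to\mathbb N$ recording the added arrows (equivalently, by Corollary \ref{no relative cycles}, there are no relative cycles, so the set of relative paths of $\widehat Q$ is finite), and it yields for $*\geq 2$
$$\HHH^*(\widehat B)\simeq \HHH^*(B)\oplus \bigoplus_{\gamma} (\dim_k\gamma)\,\Ext^*_B(I^B_{s(\gamma)}, P^B_{t(\gamma)}),$$
the sum running over the relative paths $\gamma$ of $\widehat Q$.

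Next I would invoke the Gorenstein condition in the form recalled just before the statement: since $B$ is $n$-Gorenstein, the projective dimension of $I^B_x$ is at most $n$ for every $x\in Q_0$. Hence $\Ext^m_B(I^B_{s(\gamma)}, P^B_{t(\gamma)})=0$ for all $m>n$ and every relative path $\gamma$. Substituting this into the displayed isomorphism, every summand of the direct sum dies as soon as $*\geq n+1$, so
$$\HHH^*(\widehat B)\simeq\HHH^*(B)\qquad\text{for all }*\geq \max(2,\,n+1).$$

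Finally I would split into the two cases. If $n>0$ then $n+1\geq 2$, so $\max(2,n+1)=n+1$ and the isomorphism holds for $*\geq n+1$, as claimed. If $n=0$, that is if $B$ is selfinjective, then $\max(2,1)=2$ and the isomorphism holds for $*\geq 2$; degree $1$ genuinely has to be excluded, since the splitting of $\HHH^*$ coming from Kaygun's sequence together with the tensor-algebra vanishing of Theorem \ref{cero} is only available in degrees $\geq 2$, and low-degree Hochschild cohomology is not controlled by this method (as already discussed in the Introduction for $R=\emptyset$). There is no real obstacle here; the only things to watch are that the finite dimensionality of $\widehat B$ is exactly what licenses the use of Theorem \ref{a}, and that one uses the injective-side half of the Gorenstein condition — bounded projective dimension of the $I^B_x$ — rather than the projective-side half.
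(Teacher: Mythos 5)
Your proposal is correct and is exactly the argument the paper intends: the paper states the proposition as an immediate consequence of Theorem \ref{a}, having just recorded that $n$-Gorenstein implies the projective dimension of each $I^B_x$ (and the injective dimension of each $P^B_x$) is at most $n$, so all the $\Ext$ summands vanish in degrees $\geq n+1$. Your bookkeeping with $\max(2,n+1)$ and the split into the cases $n>0$ and $n=0$ matches the statement precisely.
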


\begin{exam}
For $m\geq 2$, let $Q$ be the following cyclic connected quiver:

\begin{center}
\begin{tikzpicture}
[scale=.9]

% All nodes, node labels, and loops
\foreach \ang\lab\anch in {90/s_0/north, 45/s_1/{north east}, 0/ /east, 270/s_i/south, 180/ /west, 135/s_{m-1}/{north west}}{
  \draw[fill=black] ($(0,0)+(\ang:3)$) circle (.08);
  \node[anchor=\anch] at ($(0,0)+(\ang:2.8)$) {$\lab$};
 % \draw[->,shorten <=7pt, shorten >=7pt] ($(0,0)+(\ang:3)$).. controls +(\ang+40:1.5) and +(\ang-40:1.5) .. ($(0,0)+(\ang:3)$);
}

% Top part of circle, arrows between different nodes and their labels
\foreach \ang\lab in {90/0,45/1,180/{m-2},135/{m-1}}{
  \draw[->,shorten <=7pt, shorten >=7pt] ($(0,0)+(\ang:3)$) arc (\ang:\ang-45:3);
  \node at ($(0,0)+(\ang-22.5:3.5)$) {$a_{\lab}$};
}

% Bottom part of circle, arrows between different nodes and their labels
\draw[->,shorten <=7pt] ($(0,0)+(0:3)$) arc (360:325:3);
\draw[->,shorten >=7pt] ($(0,0)+(305:3)$) arc (305:270:3);
\draw[->,shorten <=7pt] ($(0,0)+(270:3)$) arc (270:235:3);
\draw[->,shorten >=7pt] ($(0,0)+(215:3)$) arc (215:180:3);
\node at ($(0,0)+(0-20:3.5)$) { };
\node at ($(0,0)+(315-25:3.5)$) { };
\node at ($(0,0)+(270-20:3.5)$) {$a_i$};
\node at ($(0,0)+(225-25:3.5)$) { };

% Ellipsis
\foreach \ang in {310,315,320,220,225,230}{
  \draw[fill=black] ($(0,0)+(\ang:3)$) circle (.02);
}

\end{tikzpicture}
\end{center}

For $2\leq l<m$, let $B=kQ/\langle Q_l\rangle$, which is a selfinjective algebra.  Let $\widehat Q$ be the quiver obtained from $Q$ by adding a finite number of arrows at pairs of vertices. Suppose that $\widehat B= k\widehat Q/\langle Q_l\rangle_{\widehat Q}$ is finite dimensional, that is there are no relative oriented cycles. For $*\geq 2$ $$\HHH^*(\widehat B)\simeq\HHH^*(B).$$

\end{exam}

\footnotesize
\noindent C.C.:\\
Institut Montpelli\'{e}rain Alexander Grothendieck, CNRS, Univ. Montpellier, France.\\
{\tt Claude.Cibils@umontpellier.fr}

\medskip
\noindent M.L.:\\
Instituto de Matem\'atica y Estad\'\i stica  ``Rafael Laguardia'', Facultad de Ingenier\'\i a, Universidad de la Rep\'ublica, Uruguay.\\
{\tt marclan@fing.edu.uy}

\medskip
\noindent E.N.M.:\\
Departamento de Matem\'atica, IME-USP, Universidade de S\~ao Paulo, Brazil.\\
{\tt enmarcos@ime.usp.br}

\medskip
\noindent A.S.:
\\IMAS-CONICET y Departamento de Matem\'atica,
 Facultad de Ciencias Exactas y Naturales,\\
 Universidad de Buenos Aires, Argentina. \\{\tt asolotar@dm.uba.ar}

\end{document}